\numberwithin{equation}{section}
\newtheorem{thm}{Theorem}[section]
\newtheorem{lem}[thm]{Lemma}
\newtheorem{prop}[thm]{Proposition}
\newtheorem{cor}[thm]{Corollary}
\theoremstyle{remark}
 \newtheorem{rmk}[thm]{Remark}
 \newtheorem{ex}[thm]{Example}
\newcommand{\R}{\mathbb{R}}
\newcommand{\N}{\mathbb{N}}
\newcommand{\sgn}{\operatorname{sgn}}
\newcommand{\lip}{\operatorname{Lip}}
\newcommand{\FV}{\operatorname{FV}}
\newcommand{\fillvol}{\operatorname{Fillvol}}
\newcommand{\vol}{\operatorname{Vol}}
\newcommand{\dist}{\operatorname{dist}}
\newcommand{\spa}{\operatorname{span}}
\newcommand{\mass}{\mathbf{M}}
\newcommand{\p}{\partial}
\begin{document}
 
\title{Lipschitz non-extension theorems into jet space Carnot groups}

\author{S\'everine Rigot}

\address{Laboratoire J.-A. Dieudonn\'e, CNRS-UMR 6621, Universit\'e de Nice Sophia-Antipolis, Parc Valrose, 06108 Nice cedex 02, France}
\email{rigot@unice.fr}

\author{Stefan Wenger}

\address{Department of Mathematics, University of Illinois at Chicago, 851 S. Morgan Street, Chicago, IL 60607--7045, USA }
\email{wenger@math.uic.edu}

\thanks{The second author is partially supported by NSF grant DMS 0707009}

\begin{abstract} We prove non-extendability results for Lipschitz maps with target space being jet spaces equipped with a left-invariant Riemannian distance, as well as jet spaces equipped with a left-invariant sub-Riemannian Carnot-Carath\'eodory distance. The jet spaces give a model for a certain class of Carnot groups, including in particular all Heisenberg groups.
\end{abstract}

\maketitle

\section{Introduction}

The problem of the extendability of partially defined Lipschitz maps between metric spaces is a classical problem, which has received constant research interest over the years. Beyond the well-known Kirszbraun's Theorem \cite{k,mcs}, most of the first classical results concern spaces with a linear structure, see e.g. \cite{V1,jl,jls}. Other geometrical contexts have been investigated as well, see e.g. \cite{V2,ls,lps,lschli,ln} and the references therein. Concerning Carnot groups, the case of the $n$-th Heisenberg group as target space has recently been considered, see e.g. \cite{Balogh-Faessler,gromov,Magnani_preprint}.

 In the present paper, we exhibit Lipschitz non-extendability  results for maps to jet spaces, equipped either with a left-invariant Riemannian or a left-invariant sub-Riemannian Carnot-Carath\'eodory distance, respectively. 
The jet spaces $J^k(\R^n)$, see Section~\ref{section:prelims}, give a model for a certain class of $(k+1)$-step Carnot groups, including the $n$-th Heisenberg group $J^1(\R^n)$, the Engel group $J^2(\R)$ and more generally the model filiform groups $J^k(\R)$. Recall that a Carnot group is a connected, simply connected Lie group with stratified Lie algebra. It can be endowed with two kinds of natural distances: the left-invariant Riemannian distance $d_0$ coming from an inner product on the Lie algebra which makes the layers of the stratification orthogonal and the left-invariant sub-Riemannian Carnot-Carath\'eodory distance $d_c$ defined with respect to the first, so-called horizontal, layer of the stratification. One has the relationship $d_0\leq d_c$, these two distances are however not bi-Lipschitz equivalent.

We now turn to the Lipschitz extension problem for maps to $(J^k(\R^n), d_0)$ and first note that every Lipschitz map from a bounded subset of $\R^m$, $m\geq 1$ arbitrary, to $(J^k(\R^n), d_0)$ possesses a Lipschitz extension to all of $\R^m$. This follows from the fact that the Lie exponential map is a global diffeomorphism and from the Lipschitz extension property of $\R^n$. Our first theorem is:

\begin{thm}\label{thm:non-ext-d0}
 For every $n, k\geq 1$, there is an (unbounded) subset $A\subset\R^{n+1}$ and a Lipschitz map $f: A\to(J^k(\R^n), d_0)$ which does not admit any Lipschitz extension $\bar{f}: \R^{n+1}\to(J^k(\R^n), d_0)$. 
\end{thm}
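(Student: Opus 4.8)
\emph{Strategy.} I would argue by contradiction and, crucially, place the obstruction at infinitely many scales rather than one. Since every Lipschitz map from a \emph{bounded} subset of $\R^{n+1}$ into $(J^k(\R^n),d_0)$ already extends, a counterexample must genuinely exploit unboundedness, so I would build $A$ from countably many independent ``test configurations'' sitting at scales $R_j\to\infty$. Suppose that for each large $R$ one can produce a bounded region $\Omega_R\subset\R^{n+1}$ of diameter $\asymp R$, a relatively closed set $A_R\subset\R^{n+1}\setminus\Omega_R$ with $\partial\Omega_R\subset A_R$, and an $L$-Lipschitz map $g_R\colon A_R\to(J^k(\R^n),d_0)$, $L$ independent of $R$, such that every Lipschitz extension of $g_R|_{\partial\Omega_R}$ over $\overline{\Omega_R}$ has Lipschitz constant at least $c(R)$, with $c(R)\to\infty$. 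Granting this, put $A=\R^{n+1}\setminus\bigcup_j\Omega_{R_j}$ with the $\Omega_{R_j}$ pairwise at distance far exceeding their diameters, and glue the $g_{R_j}$ together with a simple background map (constant outside a large neighbourhood of $\bigcup_j\Omega_{R_j}$, say) into a single $L'$-Lipschitz map $f\colon A\to J^k(\R^n)$; the spacing makes all cross-configuration estimates automatic. If $\bar f\colon\R^{n+1}\to J^k(\R^n)$ extended $f$, its restriction to $\overline{\Omega_{R_j}}$ would extend $g_{R_j}|_{\partial\Omega_{R_j}}$, so $\lip(\bar f)\geq c(R_j)$ for all $j$ --- absurd.

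\emph{The test configuration.} Constructing $(\Omega_R,A_R,g_R)$ is the heart of the matter, and this is where the geometry of $J^k(\R^n)$ is used. The mechanism rests on the scale-dependent comparison between $d_0$, the Carnot--Carath\'eodory distance $d_c$, and the Euclidean distance $d_e$ in exponential coordinates: near the identity $d_0\asymp d_e$, while at large scales $d_0\asymp d_c$. Consequently, over a Euclidean cell of side $\varepsilon$ with $M\varepsilon$ small, a $\lip\le M$ map can move the (one-dimensional) coordinate along the deepest layer $V_{k+1}$ only by $O(M\varepsilon)$, whereas over a cell of side $\rho\gg1$ it can move it by as much as $\asymp(M\rho)^{k+1}$. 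One then prescribes $g_R|_{\partial\Omega_R}$ to be a normalized sub-Riemannian pattern dilated by $\delta_R$: using that $V_{k+1}$ is generated by $(k+1)$-fold iterated brackets of horizontal fields, one arranges that $g_R|_{\partial\Omega_R}$ sweeps an amount $\asymp R^{k+1}$ of the $V_{k+1}$-direction, with Lipschitz constant bounded independently of $R$, in a way that saturates the stratification and leaves no room for an additional independent direction. Using Pansu differentiability and the area formula for Lipschitz maps into Carnot groups, one then shows that no Lipschitz filling over $\overline{\Omega_R}$ can reproduce this boundary behaviour with bounded Lipschitz constant: by the comparison above, at the scales on which $d_0$ is already Euclidean-like the filling simply has too small a budget to carry the $V_{k+1}$-coordinate through a prescribed sweep of size $\asymp R^{k+1}$ concentrated near the boundary, which forces $c(R)\to\infty$. (As a sanity check, the naive attempt to fill by coning along the dilation flow $t\mapsto\delta_t$ fails outright: dilation orbits are not horizontal and are only H\"older, not Lipschitz, for $d_0$.)

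\emph{Main obstacle.} The routine ingredients are the three-way comparison of $d_0$, $d_c$, $d_e$ across all scales, the verification that $f$ is Lipschitz on $A$ (a set disconnected at infinity), and the bookkeeping by which an extension restricts to fillings. The genuinely hard point --- together with pinning down the correct shape of $(\Omega_R,A_R,g_R)$, in particular how ``thin'' $\Omega_R$ should be --- is the \emph{quantitative} lower bound $c(R)\to\infty$. It must use essentially that the deepest layer is one-dimensional and of homogeneity $k+1$, \emph{and} that $n+1$ is the borderline dimension for which an $(n+1)$-plane cannot be placed ``horizontally'', compatibly with the stratification, while still detecting $V_{k+1}$ in $J^k(\R^n)$; this rigidity is what rules out a cheap filling. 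Finally, one cannot shortcut the proof by rescaling the companion (bounded-domain) counterexample for $d_c$: dilating a $d_c$-Lipschitz example up and then zooming the putative extension back down produces only a Euclidean-Lipschitz extension problem, which is always solvable --- so the construction must genuinely use that $A$ is unbounded.
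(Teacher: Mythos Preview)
Your overall architecture is correct and essentially matches the paper's: build $A$ as a union of boundary shells at scales $R_j\to\infty$, put on each shell a dilated copy $\delta_{R_j}\circ F$ of a fixed $d_c$-Lipschitz pattern $F:\partial Q^{n+1}\to J^k(\R^n)$ (so the map is uniformly $d_c$-Lipschitz, hence $d_0$-Lipschitz), and derive a contradiction from a quantitative lower bound on the $d_0$-Lipschitz constant of any filling of $\delta_R\circ F$ over $Q^{n+1}$. The paper uses concentric cubes $\partial[-2^{L-1},2^{L-1}]^{n+1}$ rather than well-separated regions, but this is cosmetic.

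The genuine gap is exactly where you flag it: the lower bound $c(R)\to\infty$. Your proposed mechanism does not work as stated. Pansu differentiability and the horizontal-unrectifiability heuristic (``an $(n+1)$-plane cannot be placed horizontally'') are statements about the $d_c$-geometry of the target; they give no constraint on a map that is merely Lipschitz into $(J^k(\R^n),d_0)$, which is just a Riemannian manifold. A $d_0$-Lipschitz map $\bar F:Q^{n+1}\to J^k(\R^n)$ need have no Pansu derivative and no horizontality of its image, so nothing prevents it from moving the center coordinate freely at the infinitesimal level. Likewise the scale heuristic ``at small scales $d_0$ is Euclidean-like, so the $V_{k+1}$-budget is only $O(M\varepsilon)$'' does not by itself produce a global obstruction: summing $O(M\varepsilon)$ over $\asymp R/\varepsilon$ cells already gives $O(MR)$, and one still needs a reason why the boundary data forces a \emph{net} displacement of order $R^{k+1}$ rather than something that cancels.

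What the paper actually does is a calibration argument that bypasses all of this. One takes the $(n+1)$-form $\omega=dx_1\wedge\cdots\wedge dx_n\wedge du^0$ on $J^k(\R^n)$ and checks, using the left-invariant frame, that $\|\omega\|_{g_0}\le1$ pointwise. The boundary map $F$ is built explicitly from jet lifts $j^k(f_0),j^k(f_1)$ of two functions agreeing to order $k$ on $\partial Q^n$ but with $\int_{Q^n}(f_0-f_1)\neq0$. For any $d_0$-Lipschitz extension $\bar F$ of $\delta_L\circ F$, Stokes' theorem for Lipschitz functions gives
\[
\Bigl|\int_{Q^{n+1}}\bar F^*\omega\Bigr|=\Bigl|\int_{\partial Q^{n+1}}(\delta_L\circ F)^*(x_1\,dx_2\wedge\cdots\wedge dx_n\wedge du^0)\Bigr|=L^{n+k+1}\Bigl|\int_{Q^n}(f_0-f_1)\Bigr|,
\]
while $\|\omega\|_{g_0}\le1$ forces $|\int_{Q^{n+1}}\bar F^*\omega|\le\lip_{d_0}(\bar F)^{n+1}$. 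Hence $\lip_{d_0}(\bar F)\gtrsim L^{1+k/(n+1)}$, which is the missing $c(R)\to\infty$. The point is that the obstruction is detected by a single closed form with bounded comass, not by any differentiability or rectifiability property of the extension.
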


In contrast, when $J^k(\R^n)$ is equipped with the distance $d_c$, there are Lipschitz maps from the unit $n$-sphere $S^n\subset\R^{n+1}$ which do not admit Lipschitz extensions to the closed unit ball $\bar{D}^{n+1}$.

\begin{thm}\label{thm:main-intro}
 For every $n, k\geq 1$, there is a Lipschitz map $f: S^n\to (J^k(\R^n), d_c)$ which does not admit any Lipschitz extension $\bar{f}: \bar{D}^{n+1}\to (J^k(\R^n), d_c)$.
\end{thm}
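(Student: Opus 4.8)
Throughout write $G=J^k(\R^n)$ and let $V_1$ be the first layer of its Lie algebra. The plan is to show that a Lipschitz extension $\bar f\colon\bar D^{n+1}\to(G,d_c)$ would have an almost everywhere highly degenerate Pansu-differential, so that after pushing it into a suitable quotient one obtains a continuous $(n+1)$-disk of vanishing $(n+1)$-dimensional measure; on the other hand $f$ will be chosen so that its boundary data cannot bound such a disk.

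The first step is a differentiability lemma. By Pansu--Rademacher, a Lipschitz map $\bar f\colon\bar D^{n+1}\to(G,d_c)$ is Pansu-differentiable a.e., with differential a homogeneous homomorphism $\R^{n+1}\to G$; since $\R^{n+1}$ is abelian, the Baker--Campbell--Hausdorff formula forces this homomorphism to have the form $v\mapsto\exp(\Lambda_p v)$ with $\Lambda_p\colon\R^{n+1}\to V_1$ linear and $\operatorname{im}\Lambda_p$ an abelian subalgebra of $V_1$. The algebraic input, to be checked from the jet-space model, is that a commutative subalgebra of $V_1$ of dimension $\ge n+1$ is contained in the commutative subalgebra $\mathfrak m$ spanned by the highest-order derivative directions; equivalently every $(n+1)$-dimensional abelian subalgebra of $V_1$ equals $\mathfrak m$. (This is vacuous when $k=1$, the symplectic layer of $\mathbb H^n$ carrying no abelian subalgebra of dimension $>n$, and when $n=1$.) Hence, for a.e.\ $p$, either $\operatorname{rank}\Lambda_p\le n$ or $\operatorname{im}\Lambda_p=\mathfrak m$.

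Set $M=\exp\mathfrak m$, a closed subgroup, and let $q\colon(G,d_c)\to(G/M,\bar d)$ be the projection onto the space of left cosets endowed with its quotient metric; $q$ is $1$-Lipschitz and its differential annihilates the tangent space to every leaf $gM$. The dichotomy above gives that $d(q\circ\bar f)_p$ has rank $\le n$ at a.e.\ $p$ (in the second case because $\operatorname{im} d\bar f_p=T_{\bar f(p)}(\bar f(p)M)=\ker dq$), so by the metric area formula (Kirchheim; Ambrosio--Kirchheim) the Lipschitz map $q\circ\bar f\colon\bar D^{n+1}\to(G/M,\bar d)$ has image of zero $\mathcal H^{n+1}_{\bar d}$-measure. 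Thus a Lipschitz extension of $f$ would fill the $n$-cycle $q_\sharp f_\sharp[S^n]$ by a continuous $(n+1)$-disk of vanishing $(n+1)$-dimensional measure in $G/M$.

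It remains to construct a Lipschitz $f\colon S^n\to(G,d_c)$ for which this is impossible. I would arrange $q\circ f$ to be supported in an $(n+1)$-dimensional slice of $G/M$ --- one spanned by the $n$ horizontal base directions together with one further, necessarily higher-weight, direction --- and to \emph{encircle} a point of that slice, i.e.\ to have nonzero Brouwer degree about it; then any filling disk must cover a neighbourhood of that point, which has positive $\mathcal H^{n+1}_{\bar d}$-measure, contradicting the previous step. Because the extra coordinate has weight $\ge 2$, making $f$ genuinely $d_c$-Lipschitz forces the displacement in that direction to be produced entirely through the horizontal contact structure --- the higher-dimensional version of the elementary fact that in $\mathbb H^1$ one moves along the centre only by running around loops in the horizontal plane. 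I expect this construction --- checking that $f$ is globally Lipschitz, that it closes up on $S^n$ (the contact holonomy cancels), and that $q\circ f$ is essential --- to be the main obstacle, with the algebraic lemma above a secondary one. For $k=1$ the scheme specializes to the Heisenberg case of Balogh--F\"assler, with $M$ an $n$-dimensional horizontal abelian subgroup; there one may equivalently argue through the abelianization $G\to V_1$. The whole argument also admits a currents formulation: a Lipschitz $\bar f$ would push $[\bar D^{n+1}]$ to an integral $(n+1)$-current of finite mass bounding $f_\sharp[S^n]$, whereas $f$ is built so that $f_\sharp[S^n]$ has infinite filling volume in $(G,d_c)$.
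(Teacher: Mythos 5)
Your proposal follows the unrectifiability/linking strategy of Balogh--F\"assler rather than the paper's argument, and as written it has genuine gaps. The most serious one is topological. For $k\geq 2$ the coset space $G/M$, with $M=\exp\mathfrak m$ and $\mathfrak m=\spa\{\p_{u_I^k}\}$, has dimension $n+\sum_{j=0}^{k-1}d_j^n\geq n+2$. Since $G/M$ is contractible and removing a point from a manifold of dimension $\geq n+2$ does not change $\pi_n$, the map $q\circ f$ is null-homotopic in $G/M\setminus\{x_0\}$ for \emph{any} point $x_0$ off its image: there is no Brouwer degree, no linking, and hence no reason a continuous filling disk must cover a neighbourhood of $x_0$. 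Confining $q\circ f$ to an $(n+1)$-dimensional slice does not help, because the filling disk is not confined to that slice, and the natural projection onto the slice involves the coordinate $u^0$ of weight $k+1$, which is not Lipschitz for $d_c$ (nor for the quotient metric), so the $\mathcal H^{n+1}$-nullity of the image does not push forward. This is exactly the obstruction the authors point out when they remark that pure-unrectifiability arguments cannot be invoked here: $(J^k(\R^n),d_c)$ fails to be purely $(n+1)$-unrectifiable as soon as $n,k\geq 2$. Two further problems: your algebraic lemma is false in general (for $n=3$, $k=2$ the span of $X_1,\p_{u^2_{(0,2,0)}},\p_{u^2_{(0,1,1)}},\p_{u^2_{(0,0,2)}}$ is a $4$-dimensional abelian subalgebra of $V_1$ not contained in $\mathfrak m$) --- although the rank bound $\operatorname{rank}d(q\circ\bar f)_p\leq n$ you want from it is in fact automatic, since $\dim(V_1/\mathfrak m)=n$, which only underscores that the real content must lie elsewhere --- and the construction of $f$ itself, which you defer, is not supplied.

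The paper's route avoids all of this by never working with measures in $(G,d_c)$ or a quotient. It glues two jet maps $j^k(f_0)$, $j^k(f_1)$ (with $f_0,f_1$ agreeing to order $k$ on $\p Q^n$ but $\int_{Q^n}f_0\neq\int_{Q^n}f_1$) into a $d_c$-Lipschitz map $F$ on $\p Q^{n+1}$, and calibrates with the form $\omega=dx_1\wedge\dots\wedge dx_n\wedge du^0$, which has comass $\leq 1$ for the \emph{Riemannian} metric $g_0$. Stokes' theorem then forces any $d_0$-Lipschitz extension of $\iota\circ\delta_L\circ F$ to have Lipschitz constant at least $L^{1+k/(n+1)}\bigl|\int_{Q^n}(f_0-f_1)\,dx\bigr|^{1/(n+1)}$, because the boundary flux scales like $L^{n+k+1}$ under $\delta_L$ while a $\lambda$-Lipschitz disk can carry flux at most $\lambda^{n+1}$. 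A $d_c$-Lipschitz extension of $F$ would produce $d_0$-Lipschitz extensions of the dilates with constant only $O(L)$, a contradiction for large $L$. The anisotropic scaling of $\delta_L$ on the $u^0$-coordinate, played off against the comass bound in $g_0$, is precisely what substitutes for the degree argument that is unavailable here; if you want to pursue your scheme you would need to find a replacement for that step.
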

Both theorems come as a consequence of a more general construction, given in Section~\ref{section:main-results-proof}, which may be considered to be the core of this paper. Roughly speaking, we construct a map $F: S^n\to J^k(\R^n)$ which is Lipschitz with respect to the distance $d_c$, and thus also to $d_0$, such that any extension to $\bar{D}^{n+1}$ of its dilate $\delta_L\circ F$ must have Lipschitz constant with respect to $d_0$ of order at least $L^{1+\frac{k}{n+1}}$. Since $d_c$ is $1$-homogeneous with respect to the dilations, $\delta_L\circ F$ has Lipschitz constant of order $L$ with respect to $d_c$, and thus also to $d_0$. We use this super-linear growth of the Lipschitz constant to prove our Lipschitz non-extension results stated above. We finally mention that we actually construct a whole family of such maps $F$, very naturally obtained through the jet map construction.

Theorem~\ref{thm:main-intro} extends an analogous result for the Heisenberg groups $J^1(\R^n)$, recently obtained by Balogh-F\"assler in  \cite{Balogh-Faessler} with different methods. The proof in \cite{Balogh-Faessler} relies heavily on the fact that $J^1(\R^n)$ is purely $(n+1)$-unrectifiable. We note that arguments relying on pure unrectifiability as in \cite{Balogh-Faessler} could not be invoked to obtain Theorem~\ref{thm:main-intro} in our generality. Indeed, $(J^k(\R^n), d_c)$ is purely $m$-unrectifiable if and only if 
$$m> \binom{n+k-1}{k},$$
see \cite{Magnani}, but $\binom{n+k-1}{k}\geq n+1$ as soon as $n, k\geq 2$.

As a consequence of our constructions, we also obtain the following Lipschitz non-extension result for maps between jet spaces. In order to state the result, we introduce some notation. A map $f: A\subset J^l(\R^m)\to J^k(\R^n)$ will be called $(d, d')$-Lipschitz if $f$ is Lipschitz as a map from $A\subset (J^l(\R^m), d)$ to $(J^k(\R^n), d')$, where $d$ and $d'$ denote either $d_0$ or $d_c$. 

\begin{cor}\label{cor:jet-jet}
 Let $l,m,n\geq 1$ be such that $\binom{m+l-1}{l}\geq n+1$, and let $k\geq 1$. Then there exists a subset $A\subset J^l(\R^m)$ and a map $f: A\to J^k(\R^n)$ which is $(d_0, d_c)$-Lipschitz but such that no extension $\bar{f}: J^l(\R^m)\to J^k(\R^n)$ of $f$ can be $(d_c, d_0)$-Lipschitz.
\end{cor}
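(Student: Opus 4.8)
The plan is to transplant the (unbounded) domain construction underlying Theorem~\ref{thm:non-ext-d0} from Euclidean space $\R^{n+1}$ into a suitable \emph{flat} abelian subgroup of $J^l(\R^m)$, whose dimension is governed precisely by the quantity $\binom{m+l-1}{l}$.

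The first step is to isolate the algebraic input. In $J^l(\R^m)$ (see Section~\ref{section:prelims}) consider the subspace $\mathfrak{w}$ of the Lie algebra spanned by the coordinate directions $\partial_{u_\alpha}$ with $|\alpha|=l$. These top-order coordinates are additive in the group law and have homogeneity degree $1$ under the dilations, so the corresponding left-invariant vector fields pairwise commute and lie in the first layer $V_1$ of the stratification; hence $\mathfrak{w}$ is an abelian horizontal subalgebra with $\dim\mathfrak{w}=\binom{m+l-1}{l}$. Let $W=\exp\mathfrak{w}$ be the associated Carnot subgroup. Since $\mathfrak{w}\subseteq V_1$ is abelian, straight segments in $W$ (in exponential coordinates) are horizontal curves of Euclidean length, so $d_c|_W\le d_{\mathrm{Eucl}}$; on the other hand the abelianization homomorphism $J^l(\R^m)\to V_1$ has norm-nonincreasing differential for $d_0$ (the layers being $d_0$-orthogonal) and restricts to the identity on $W$, so $d_0|_W\ge d_{\mathrm{Eucl}}$. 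Together with $d_0\le d_c$ this forces $d_0|_W=d_c|_W=d_{\mathrm{Eucl}}$, i.e.\ both $(W,d_0)$ and $(W,d_c)$ are isometric to Euclidean $\R^{\binom{m+l-1}{l}}$. The hypothesis $\binom{m+l-1}{l}\ge n+1$ then provides a linear map $\iota\colon\R^{n+1}\hookrightarrow W\subset J^l(\R^m)$ that is an isometric embedding simultaneously for the source metric $d_0$ and for the target metric $d_c$.

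Next I would invoke the construction of Section~\ref{section:main-results-proof} underlying Theorem~\ref{thm:non-ext-d0}: it produces an unbounded set $A_0\subset\R^{n+1}$ together with a map $f_0\colon A_0\to J^k(\R^n)$ which, being obtained by gluing suitably rescaled copies of the $d_c$-Lipschitz maps $\delta_L\circ F$ placed at matching scales, is Lipschitz as a map $(A_0,d_{\mathrm{Eucl}})\to(J^k(\R^n),d_c)$, yet admits no extension to all of $\R^{n+1}$ that is Lipschitz into $(J^k(\R^n),d_0)$ (the latter is exactly the content of Theorem~\ref{thm:non-ext-d0}). Put $A:=\iota(A_0)\subset J^l(\R^m)$ and $f:=f_0\circ(\iota|_{A_0})^{-1}\colon A\to J^k(\R^n)$. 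Because $\iota$ identifies $(A_0,d_{\mathrm{Eucl}})$ isometrically with $(A,d_0)$, the map $f$ is $(d_0,d_c)$-Lipschitz, as required.

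Finally, suppose for contradiction that some extension $\bar f\colon J^l(\R^m)\to J^k(\R^n)$ of $f$ were $(d_c,d_0)$-Lipschitz. Restricting $\bar f$ to $W$ and precomposing with $\iota$ yields $\bar f\circ\iota\colon(\R^{n+1},d_{\mathrm{Eucl}})\to(J^k(\R^n),d_0)$, which is Lipschitz (using $d_c|_W=d_{\mathrm{Eucl}}$) and agrees with $f_0$ on $A_0$; this is a Lipschitz extension of $f_0$ to all of $\R^{n+1}$ into $(J^k(\R^n),d_0)$, contradicting the choice of $f_0$. Hence no $(d_c,d_0)$-Lipschitz extension of $f$ exists. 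The only point demanding genuine care is the previous paragraph, namely extracting from the core construction a single map that is at once $d_c$-Lipschitz on an unbounded Euclidean domain and obstructs all $d_0$-Lipschitz extensions; once the identity $d_0|_W=d_c|_W=d_{\mathrm{Eucl}}$ is established, the remainder is bookkeeping.
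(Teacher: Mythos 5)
Your proof is correct and takes essentially the same route as the paper: the set you call $W=\exp\mathfrak{w}$ is exactly the paper's $E=\{x=0\}\cap\{u^i=0:\ i<l\}$, the identity $d_0|_W=d_c|_W=d_{\mathrm{Eucl}}$ is established there via the coordinate projection onto $u^l$ (your abelianization argument is an equivalent variant), and the transplantation of the map from the proof of Theorem~\ref{thm:non-ext-d0} and the restriction argument for the contradiction are the same. No gaps.
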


Since $d_0\leq d_c$, the notion of $(d_0, d_c)$-Lipschitz map is the strongest one and that of $(d_c, d_0)$-Lipschitz map the weakest. In particular the map $f$ in Corollary~\ref{cor:jet-jet} gives at the same time an example of a partially defined $(d_0, d_0)$-Lipschitz, resp. $(d_c, d_c)$-Lipschitz, map which does not admit any $(d_0, d_0)$-Lipschitz, resp. $(d_c, d_c)$-Lipschitz, extension.

We finally note that our construction in Section~\ref{section:main-results-proof} can be used to recover lower bounds for the $(n+1)$-st filling volume function $\FV_{n+1}$ on $(J^k(\R^n), d_0)$. This will be investigated in Section~\ref{section:lower-bounds-FV}. Such bounds have recently been obtained by Young in \cite{Young-nilpotent-isop} with different methods. One specific feature of our construction is that we can easily exhibit explicit constants. 

The paper is organized as follows. In Section~\ref{section:prelims} we recall the definition of jet spaces together with the metric structures we are interested in. In Section~\ref{section:main-results-proof} we give our main construction and show how the results stated above follow. The final Section~\ref{section:lower-bounds-FV} applies the methods developed in Section~\ref{section:main-results-proof} to obtain lower bounds for the filling volume functions in jet spaces.

\section{Preliminaries}\label{section:prelims}

\subsection*{Jet spaces as Carnot groups}

The jet spaces $J^k(\R^n)$ give a model for a certain class of Carnot groups. This class includes a model for the Heisenberg group $J^1(\R^n)$, see Example \ref{heisenberg}, for the Engel group $J^2(\R)$ and more generally for the model filiform groups $J^k(\R)$. We follow here \cite{Warhurst} to which we refer for all  details. 

Let $n, k\geq 1$. The $k$-th order Taylor polynomial of a $C^k$-smooth function $f:\R^n \rightarrow \R$ at $x_0$ is given by
\begin{equation*}
 T^k_{x_0}(f) (x) = \sum_{j=0}^k \sum_{I\in I(j)} \p_I f(x_0) \, \dfrac{(x-x_0)^I}{I!}
\end{equation*}
where, for $j = 0,\dots,k$, each $j$-index $I = (i_1,\dots,i_n)$ satisfies $|I| = i_1 + \cdots + i_n = j$, $I(j)$ denotes the set of all $j$-indices, $I!=i_1!\dots i_n!$,  $x^I=(x_1)^{i_1}\cdots (x_n)^{i_n}$, and 
\begin{equation*}
 \p_I f(x_0) = \dfrac{\p^j f}{\p x_1^{i_1}\dots \p x_n^{i_n}} (x_0).
\end{equation*}
Two functions $f_1$, $f_2 \in C^k(\R^n,\R)$ are said to be equivalent at $x_0$, $f_1 \sim_{x_0} f_2$, if and only if $T^k_{x_0}(f_1) =  T^k_{x_0}(f_2)$. The equivalence class of $f$ at $x_0$ is denoted by $j_{x_0}^k(f)$. The $k$-jet space over $\R^n$ is defined as
\begin{equation*}
 J^k(\R^n) = \bigcup_{x_0\in\R^n} \, C^k(\R^n,\R)/\sim_{x_0}.
\end{equation*}
 We identify 
\begin{equation*}
 J^k(\R^n) \thickapprox \R^n \times \R^{d_k^n} \times \dots \times \R^{d_0^n},
\end{equation*}
where $d_j^n = \binom{n+j-1}{j}$, via the global coordinates $(x,u^k,\dots,u^0)$, where $x(j_{x_0}^k(f)) = x_0$ and $u^j = (u_I^j)_{I\in I(j)}$ with $u_I^j(j_{x_0}^k(f)) = \p_I f(x_0)$ for $j=0,\dots,k$.

The horizontal subbundle of the tangent bundle is defined by
\begin{equation*}
\mathcal{H}_0 = \spa \left\{X_i; \, i=1,\dots,n\right\} \oplus \spa \left\{\p_{u_I^k};\, I \in I(k)\right\}
\end{equation*}
where, setting $e_i=(\delta_1^i,\dots,\delta_n^i) \in I(1)$,
\begin{equation*}
X_i = \p_{x_i} + \sum_{j=0}^{k-1} \sum_{I\in I(j)} u_{I+e_i}^{j+1} \p_{u_I^j}.
\end{equation*}
For $j=1,\dots,k$, we set
\begin{equation*}
\mathcal{H}_j = \spa \left\{\p_{u_I^{k-j}};\, I \in I(k-j)\right\}.
\end{equation*}
The only non trivial commutators are 
\begin{equation*}
 \left[\p_{u_{I+e_i}^{j+1}}, X_i\right] = \p_{u_I^j}, \quad I\in I(j), \quad j=0,\dots,k-1.
\end{equation*}
It follows that $\mathcal{H}_j = [\mathcal{H}_0,\mathcal{H}_{j-1}]$ where $j=1,\dots,k$ and $\mathcal{H} = \mathcal{H}_0 \oplus \dots \oplus \mathcal{H}_k$ is a $(k+1)$-step stratified Lie algebra which span $TJ^k(\R^n)$ pointwise. Corresponding to this Lie algebra, there is a Carnot group $G^k_n$, i.e., a connected, simply connected and stratified Lie group, unique up to isomorphism. One can actually determine a product $\odot$ on $J^k(\R^n)$ that makes $(J^k(\R^n),\odot)$ isomorphic to $G^k_n$ and we consider in the present paper $J^k(\R^n)$ as being equipped with this product and structure of Carnot group with Lie algebra $\mathcal{H}$. We will however not need here an explicit expression for $\odot$ and we refer to \cite{Warhurst} for further details. 

A Carnot group is naturally equipped with a family of dilations. For $L>0$, they are given on the Lie algebra by $\delta_L(\sum_{j=0}^k V_j) =  \sum_{j=0}^k L^{j+1} V_j$, where $V_j \in \mathcal{H}_j$, and define in turn dilations on the group via the exponential map. In the case of $J^k(\R^n)$, we have
\begin{equation*}
 \delta_L(x,u^k,u^{k-1},\dots,u^0) = (Lx,Lu^k,L^2 u^{k-1}, \dots,L^{k+1} u^0).
\end{equation*}
These dilations are group homomorphisms.

\begin{ex} \label{heisenberg} The Heisenberg group. When $k=1$, $n\geq 1$, the Lie algebra $\mathcal{H}$ is isomorphic to the Lie algebra of the $n$-th Heisenberg group. Using normal coordinates of the second kind, one recovers the above introduced coordinates on $J^1(\R^n)$. Relabelling some of the coordinates, namely, $y_i = u_{e_i}^1$, $y = (y_1,\dots,y_n)$ and $z=u^0$, the group law is then given by 
\begin{equation*}
 (x,y,z) \odot (x',y',z) = (x+x',y+y',z+z'+\sum_{i=1}^n y_i x'_i),
\end{equation*}
the left invariant horizontal vector fields being
\begin{equation*}
 X_i = \p_{x_i} + y_i \p_z \quad \text{and} \quad Y_i = \p_{y_i},
\end{equation*}
with the only non trivial commutators $[Y_i,X_i] = \p_z$. The dilations are 
\begin{equation*}
 \delta_L(x,y,z) = (Lx,Ly,L^2 z).
\end{equation*}
\end{ex}

\subsection*{Metric structures}
We equip $J^k(\R^n)$ with the left invariant Riemannian metric $g_0$ which makes $(X_1,\dots,X_n,\p_{u_I^j})_{j=0,\dots,k, I\in I(j)}$ an orthonormal basis and we denote by $d_0$ the induced left invariant Riemannian distance.

The Carnot-Carath\'eodory distance $d_c$ on $J^k(\R^n)$ is the sub-Riemannian distance defined by 
\begin{equation*}
 d_c(x,y) = \inf \{ length_{g_0} (\gamma); \; \gamma \text{ horizontal } C^1 \text{ curve joining } x \text{ to } y\},
\end{equation*}
where a $C^1$ curve is said to be horizontal if, at every point, its tangent vector belongs to the horizontal subbundle of the tangent bundle. Important properties of the Carnot-Carath\'eodory distance are that it is left invariant and 1-homogeneous with respect to the dilations, i.e., $d_c(\delta_L(x),\delta_L(y)) = L\,d_c(x,y)$ for all $x$, $y \in J^k(\R^n)$ and all $L\geq 0$. This follows from the homogeneity of the horizontal vector fields with respect to the dilations. 

We obviously have the relationship
\begin{equation*}
 d_0 \leq d_c,
\end{equation*}
which will be crucial in the present paper. However it is well-known that $d_0$ and $d_c$ are not bi-Lipschitz equivalent. In particular $J^k(\R^n)$-valued maps that are Lipschitz with respect to the distance $d_c$ are also Lipschitz with respect to the distance $d_0$, but the converse is false in general. We will thus always specify the distance $J^k(\R^n)$ is endowed with when speaking about $J^k(\R^n)$-valued Lipschitz maps, writing explicitly $(J^k(\R^n),d_0)$, resp. $(J^k(\R^n),d_c)$, as target space.

\subsection*{Jet maps}
A differentiable map from $\R^n$ with values in $J^k(\R^n)$ is said to be horizontal if the image of its differential lies is the horizontal subbundle of the tangent bundle. If $f\in C^{k+1}(\R^n,\R)$, then the map $j^k(f):x \mapsto j_{x}^k(f)$ is a horizontal $C^1$ map with 
\begin{equation*}
 \p_{x_i} (j^k(f)) (x) =  X_i(j_x^k(f)) + \sum_{I \in I(k)} \p_{I+e_i} f(x) \,\p_{u_I^k}
\end{equation*}
for $i=1,\dots,n$. Fix $x,y\in\R^n$ and define $\gamma(t) := (1-t)x+ty$. It follows that $t\in [0,1] \mapsto j^k(f)(\gamma(t))$ is a horizontal curve between $x$ and $y$ and we get from the very definition of $d_c$ that 
\begin{equation*}
 d_c(j^k(f)(x),j^k(f)(y)) \leq \sup_{t\in [0,1]} \left(1+\sum_{I\in I(k)} \sum_{i=1}^n (\p_{I+e_i} f(\gamma(t))^2\right)^{1/2} \, \|y-x\|.
\end{equation*}
In particular, the map $j^k(f):\R^n \rightarrow (J^k(\R^n),d_c)$ is locally Lipschitz. 

\section{Proof of the main results}\label{section:main-results-proof}

As already stressed in the introduction all results in the present paper rely on a single construction which in turn may be considered to be the core of our paper. We give this construction below and next show how Theorem~\ref{thm:non-ext-d0}, Theorem~\ref{thm:main-intro}, and Corollary~\ref{cor:jet-jet} follow. 

\subsection*{The main construction}
Let $n,k\geq 1$ and set $Q^n:= [0,1]^n$. With each pair $f_0, f_1: \R^n\to\R$ of $C^{k+1}$-smooth functions satisfying 
\begin{equation}\label{eqn:f0=f1}
 \p_I f_0(x) = \p_I f_1(x) 
\end{equation}
for all $x\in \partial Q^n$ and all $I\in I(j)$, $j=0,\dots, k$, we associate a Lipschitz map $$F:\partial Q^{n+1}\to (J^k(\R^n), d_c)$$ as follows. Denoting points in $\partial Q^{n+1} = (\partial Q^n \times [0,1]) \cup (Q^n \times \{0,1\})$ by $(x,t)$, we set
\begin{equation}\label{eqn:def-Lip-assoc}
 F(x,t):= \left\{
  \begin{array}{rl}
   j^k(f_0)(x) & \text{if $x\in Q^n$ and $t=0$}\\
   j^k(f_1)(x) & \text{if $x\in Q^n$ and $t=1$}\\
   j^k(f_0)(x) & \text{if $x\in\partial Q^n$ and $t\in(0,1)$.}\\ 
  \end{array}\right.
\end{equation}
Since $j^k(f_0) = j^k(f_1)$ on $\partial Q^n$, it is clear that $F$ is well-defined.
Furthermore, since $j^k(f_0)$, $j^k(f_1): Q^n \rightarrow (J^k(\R^n), d_c)$ are Lipschitz maps, see Section~\ref{section:prelims}, it follows that $F:\partial Q^{n+1}\to (J^k(\R^n), d_c)$ is also Lipschitz. 

Next, by homogeneity of the Carnot-Carath\'eodory distance with respect to the dilations, see Section~\ref{section:prelims}, we have that $\delta_L\circ F: \partial Q^{n+1} \rightarrow (J^k(\R^n), d_c)$ is $(L\lip_{d_c}(F))$-Lipschitz for every $L\geq 0$. Now, denote by $\iota: (J^k(\R^n), d_c) \to (J^k(\R^n), d_0)$ the identity map. This map is $1$-Lipschitz and it follows that the map $\iota\circ\delta_L\circ F: \partial Q^{n+1} \rightarrow (J^k(\R^n), d_0)$ is $(L\lip_{d_c}(F))$-Lipschitz for every $L\geq 0$.

\begin{prop}\label{thm:general-case}
 Let $f_0, f_1$ be as above. Then for every $L> 0$, any Lipschitz extension $\bar{F}: Q^{n+1}\to (J^k(\R^n), d_0)$ of $\iota\circ\delta_L\circ F$ has Lipschitz constant 
\begin{equation}\label{eqn:Lip-const-ext}
 \lip_{d_0} (\bar{F}) \geq L^{1+\frac{k}{n+1}}\left| \int_{Q^n}(f_0-f_1)dx\right|^{\frac{1}{n+1}}.
\end{equation}
\end{prop}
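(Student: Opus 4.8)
The plan is to pair the pullback $\bar F^{*}\omega$ of a carefully chosen exact $(n+1)$-form $\omega$ on $J^{k}(\R^{n})$ over the cube $Q^{n+1}$, to compute this integral from the boundary data of $\bar F$ via Stokes' theorem, and to bound it from above by $(\lip_{d_{0}}\bar F)^{n+1}$; comparing the two estimates then forces the stated lower bound on $\lip_{d_{0}}(\bar F)$.

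Concretely, I would take $\eta:=u^{0}\,dx_{1}\wedge\cdots\wedge dx_{n}$ and $\omega:=d\eta=du^{0}\wedge dx_{1}\wedge\cdots\wedge dx_{n}$. The point of this choice is that $\omega$ has comass $1$ at every point of $J^{k}(\R^{n})$ with respect to $g_{0}$. To see this, set $\theta^{0}:=du^{0}-\sum_{i=1}^{n}u_{e_{i}}^{1}\,dx_{i}$. Using the explicit formula for $X_{i}$ one checks directly that $\theta^{0}(X_{i})=0$ for every $i$, that $\theta^{0}(\p_{u^{0}})=1$, and that $\theta^{0}$ vanishes on all the remaining frame vectors $\p_{u_{I}^{j}}$; hence $\theta^{0}$ is exactly the member of the $g_{0}$-orthonormal coframe dual to $\p_{u^{0}}$, while $dx_{1},\dots,dx_{n}$ are the coframe members dual to $X_{1},\dots,X_{n}$. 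Since $du^{0}\wedge dx_{1}\wedge\cdots\wedge dx_{n}=\theta^{0}\wedge dx_{1}\wedge\cdots\wedge dx_{n}$, the form $\omega$ is at each point a wedge of orthonormal covectors, i.e.\ a unit simple $(n+1)$-covector, so its comass is $1$.

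Now let $\bar F\colon Q^{n+1}\to(J^{k}(\R^{n}),d_{0})$ be any Lipschitz extension of $\iota\circ\delta_{L}\circ F$. By Rademacher's theorem $\bar F$ is differentiable a.e.\ with $\|d\bar F_{x}\|\le\lip_{d_{0}}(\bar F)$, so $|\bar F^{*}\omega|_{x}\le\operatorname{comass}(\omega)\,\|\Lambda^{n+1}d\bar F_{x}\|\le\|d\bar F_{x}\|^{n+1}\le(\lip_{d_{0}}\bar F)^{n+1}$ a.e.; since $|Q^{n+1}|=1$ this gives
\[
 \left|\int_{Q^{n+1}}\bar F^{*}\omega\right|\le(\lip_{d_{0}}\bar F)^{n+1}.
\]
On the other hand, Stokes' theorem (valid for the Lipschitz map $\bar F$ on the cube, e.g.\ because $\p(\bar F_{\#}[Q^{n+1}])=\bar F_{\#}[\p Q^{n+1}]$ and $\omega=d\eta$) together with $\bar F|_{\p Q^{n+1}}=\iota\circ\delta_{L}\circ F$ yields
\[
 \int_{Q^{n+1}}\bar F^{*}\omega=\int_{\p Q^{n+1}}(\delta_{L}\circ F)^{*}\eta .
\]
I then evaluate the right-hand side from \eqref{eqn:def-Lip-assoc}. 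On the lateral faces $\p Q^{n}\times[0,1]$ the map $\delta_{L}\circ F$ depends only on $x\in\p Q^{n}$, hence factors through an $(n-1)$-dimensional set, so the pullback of the $n$-form $\eta$ vanishes there. On $Q^{n}\times\{0\}$, resp.\ $Q^{n}\times\{1\}$, the map equals $\delta_{L}\circ j^{k}(f_{0})$, resp.\ $\delta_{L}\circ j^{k}(f_{1})$; since $u^{0}(j^{k}(f_{i})(x))=f_{i}(x)$, the base coordinate of $j^{k}(f_{i})(x)$ is $x$, and $\delta_{L}$ multiplies $u^{0}$ by $L^{k+1}$ and each $x_{i}$ by $L$, one obtains $(\delta_{L}\circ j^{k}(f_{i}))^{*}\eta=L^{n+k+1}f_{i}(x)\,dx_{1}\wedge\cdots\wedge dx_{n}$. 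Taking the induced boundary orientation into account, these two faces together contribute $L^{n+k+1}\bigl(\int_{Q^{n}}f_{1}\,dx-\int_{Q^{n}}f_{0}\,dx\bigr)$. Combining the two displays,
\[
 L^{n+k+1}\left|\int_{Q^{n}}(f_{0}-f_{1})\,dx\right|\le(\lip_{d_{0}}\bar F)^{n+1},
\]
and taking $(n+1)$-st roots gives \eqref{eqn:Lip-const-ext}, since $\tfrac{n+k+1}{n+1}=1+\tfrac{k}{n+1}$.

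The substance of the argument is concentrated in the first two steps: recognizing that $\eta=u^{0}\,dx_{1}\wedge\cdots\wedge dx_{n}$ is the right primitive — it is precisely the form whose exterior derivative couples the deepest-layer coordinate $u^{0}$, which absorbs the factor $L^{k+1}$ under the dilation, with the $n$ base directions — and checking that $d\eta$ has unit comass for $g_{0}$, which is special to this form. The remaining ingredients (the comass–Lipschitz estimate and the validity of Stokes' theorem for a merely Lipschitz map) are standard; the latter is the only point needing some care, and can be handled either within the theory of integral currents via $\p\circ\bar F_{\#}=\bar F_{\#}\circ\p$, or by approximating $\bar F$ in $W^{1,p}$ by smooth maps whose Lipschitz constants tend to $\lip_{d_{0}}(\bar F)$.
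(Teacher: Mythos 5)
Your proof is correct and follows essentially the same route as the paper: calibrate with the unit-comass exact form $dx_1\wedge\dots\wedge dx_n\wedge du^0$ (your coframe computation with $\theta^0=du^0-\sum_i u_{e_i}^1\,dx_i$ is exactly the content of Lemma~\ref{lem:diff-form-norm}), apply a Lipschitz version of Stokes' theorem (the paper's Lemma~\ref{lem:stokes}, proved by smoothing and weak$^*$ continuity of minors), and compare the boundary integral with the comass bound $(\lip_{d_0}\bar{F})^{n+1}$. The only real difference is your choice of primitive $u^0\,dx_1\wedge\dots\wedge dx_n$ in place of the paper's $x_1\,dx_2\wedge\dots\wedge dx_n\wedge du^0$, which lets you read off the boundary contribution directly from the top and bottom faces instead of via an integration by parts using \eqref{eqn:f0=f1}.
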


The proof of Proposition~\ref{thm:general-case} can be thought of as some variant of calibration's techniques. Similar techniques have already been used in \cite[Chapter 8]{epstein} and later by Burillo \cite{b} to obtain lower bounds for the filling volume of suitable $n$-cycles in the $n$-th Heisenberg group. The construction of these $n$-cycles in \cite{b} is rather complicated. The construction of our map $F$ is in contrast fairly simple and moreover very natural when using the jet space model. We first prove two auxiliary lemmas.

 \begin{lem}\label{lem:diff-form-norm}
  The $(n+1)$-form $\omega: = dx_1\wedge\dots\wedge dx_n\wedge du^0$ on $J^k(\R^n)$ satisfies
\begin{equation}\label{eqn-omega}
|\omega(p)(V_1,\dots, V_{n+1})|\leq 1
\end{equation}
for all $p\in J^k(\R^n)$ and all $V_1,\dots, V_{n+1}\in T_pJ^k(\R^n)$ with $\|V_i\|_{g_0}\leq 1$. 
 \end{lem}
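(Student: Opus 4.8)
The plan is to express everything in terms of the global left-invariant orthonormal coframe dual to the chosen orthonormal frame, and then to bound the coefficient of $\omega$ in that coframe. Concretely, let $\theta^1,\dots,\theta^n,\theta^j_I$ (for $j=0,\dots,k$, $I\in I(j)$) denote the left-invariant $1$-forms dual to $X_1,\dots,X_n,\partial_{u^j_I}$; by construction these form a $g_0$-orthonormal coframe at every point. It then suffices to show that, written in this coframe, $\omega = \pm\,\theta^1\wedge\dots\wedge\theta^n\wedge\theta^0$, i.e. $\omega$ is (up to sign) a wedge of $n+1$ distinct members of an orthonormal coframe; once this is known, the bound \eqref{eqn-omega} is the elementary linear-algebra fact that a decomposable $(n+1)$-form built from orthonormal covectors has comass one, so that $|\omega(p)(V_1,\dots,V_{n+1})|\le \prod_i\|V_i\|_{g_0}\le 1$ whenever $\|V_i\|_{g_0}\le 1$.

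First I would compute the dual coframe explicitly from the definition $X_i = \partial_{x_i} + \sum_{j=0}^{k-1}\sum_{I\in I(j)} u^{j+1}_{I+e_i}\,\partial_{u^j_I}$. Since $X_i$ differs from $\partial_{x_i}$ only by a combination of the $\partial_{u^j_I}$ with $j\le k-1$, and the remaining frame vectors $\partial_{u^j_I}$ are themselves coordinate vector fields, one reads off that the covector $dx_i$ is exactly $\theta^i$ (it annihilates all $\partial_{u^j_I}$ and pairs to $1$ with $X_i$ and to $0$ with $X_{i'}$, $i'\ne i$), and that $du^0 = du^0_{I_0}$ with $I_0$ the unique $0$-index equals $\theta^0_{I_0}$, because $\partial_{u^0}$ appears in no $X_i$ (the relevant terms in $X_i$ involve $u^{j+1}_{I+e_i}$ with $j\le k-1$, so they feed $\partial_{u^j_I}$ for $j\le k-1$, never $j=k$ — wait, rather they feed $\partial_{u^j_I}$ for $j=0,\dots,k-1$, in particular $j=0$, so $X_i$ does contain a $\partial_{u^0}$ term). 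Hence more care is needed: $du^0$ is \emph{not} left-invariant. I would instead just directly evaluate $\omega$ on frame vectors: $dx_1\wedge\dots\wedge dx_n\wedge du^0$ applied to any $n+1$ of the frame fields $X_1,\dots,X_n,\partial_{u^j_I}$ is nonzero only for the tuple $(X_1,\dots,X_n,\partial_{u^0})$ — here using that each $dx_i$ kills every $\partial_{u^j_I}$, so the first $n$ slots must be filled (in order, up to sign) by $X_1,\dots,X_n$, and the last slot by a frame vector on which $du^0$ is nonzero, i.e. $\partial_{u^0}$ — and on that tuple it equals $1$. Thus in the orthonormal coframe $\omega = \theta^1\wedge\dots\wedge\theta^n\wedge\theta^0$ exactly, which is the decomposable orthonormal form we wanted.

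The only genuinely substantive point — and the place where one must be careful rather than where there is real difficulty — is the bookkeeping just described: confirming that wedging the $n$ covectors $dx_i$, which annihilate the entire vertical distribution, forces the first $n$ arguments to be $X_1,\dots,X_n$ up to permutation and sign, and that $du^0$ then selects $\partial_{u^0}$ in the last slot, so that $\omega$ evaluated on the orthonormal frame is $\pm1$ on exactly one $(n+1)$-tuple and $0$ on all others. Granting this, decomposing arbitrary unit vectors $V_i$ in the orthonormal frame, expanding $\omega(p)(V_1,\dots,V_{n+1})$ multilinearly, and applying Hadamard's inequality (the determinant of a matrix with columns of norm $\le 1$ has absolute value $\le 1$) yields \eqref{eqn-omega}, completing the proof.
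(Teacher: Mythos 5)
Your argument is correct and is essentially the paper's proof in a slightly different packaging: the paper expands $\omega(p)(V_1,\dots,V_{n+1})$ directly in the orthonormal frame and observes that the extra $u^1_{e_l}$-terms vanish because $dx_1\wedge\dots\wedge dx_n\wedge dx_l=0$, which is exactly your observation that $\omega=\theta^1\wedge\dots\wedge\theta^n\wedge\theta^0$ in the dual coframe; both then conclude with Hadamard's inequality applied to the coefficient vectors $(v_1^i,\dots,v_n^i,v_0^i)$. Your self-correction about $du^0$ not being dual to $\partial_{u^0}$ is the right catch, and the frame-evaluation bookkeeping you describe goes through.
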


\begin{proof}
 Let $p=(x,u^k,\dots,u^0)$ and 
\begin{equation*}
 V_i = \sum_{l=1}^n v_l^i \, X_l(p) + \sum_{j=1}^k \sum_{I\in I(j)} v_I^i \, \p_{u_I^j} + v_0^i \, \p_{u^0}
\end{equation*}
be as in the statement. We have
\begin{equation*}
\begin{split}
 \omega(p)(V_1,\dots, V_{n+1}) &= \sum_{\sigma \in \mathcal{S}_{n+1}} \sgn(\sigma) \, v_1^{\sigma(1)} \cdots v_n^{\sigma(n)} \, (v_1^{\sigma(n+1)} u_{e_1}^1 + \dots  +v_n^{\sigma(n+1)} u_{e_n}^1 + v_0^{\sigma(n+1)})\\
&= \det (\overline V_1,\dots,\overline V_{n+1}) + \sum_{l=1}^n u_{e_l}^1 \, dx_1\wedge\dots\wedge dx_n\wedge dx_l \,(V_1,\dots,V_{n+1})\\
&=\det (\overline V_1,\dots,\overline V_{n+1}) 
\end{split}
\end{equation*}
where $\overline V_i = ( v_1^i, \dots ,v_n^i,v_0^i) \in \R^{n+1}$. Since $\|\overline V_i\| \leq \|V_i\|_{g_0}\leq 1$, where $\|\overline V_i\|$ denotes the standard Euclidean norm of $\overline V_i$ in $\R^{n+1}$, inequality \eqref{eqn-omega} follows.
\end{proof}

\begin{lem}\label{lem:stokes}
 Let $h_1, \dots, h_{n+1}:Q^{n+1}\to\R$ be Lipschitz functions. Then 
 \begin{equation*}
  \int_{Q^{n+1}} dh_1\wedge\dots\wedge dh_{n+1} = \int_{\partial Q^{n+1}} h_1dh_2\wedge\dots\wedge dh_{n+1}.
 \end{equation*}
\end{lem}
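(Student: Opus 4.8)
The plan is to prove Lemma~\ref{lem:stokes} as an application of Stokes' theorem for Lipschitz (hence locally Lipschitz, locally bounded, a.e.\ differentiable) forms on the cube. The key algebraic observation is that for smooth functions the Leibniz rule gives $d(h_1\,dh_2\wedge\cdots\wedge dh_{n+1}) = dh_1\wedge dh_2\wedge\cdots\wedge dh_{n+1}$, since $d(dh_i)=0$; applying the ordinary Stokes theorem to the $n$-form $h_1\,dh_2\wedge\cdots\wedge dh_{n+1}$ on $Q^{n+1}$ then yields exactly the claimed identity. So the only real content is to push this through the low regularity.

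The steps I would carry out are as follows. First, reduce to the smooth case by approximation: extend each $h_i$ to a compactly supported Lipschitz function on $\R^{n+1}$ (possible by the standard Lipschitz extension / McShane construction, keeping the Lipschitz constants controlled), mollify to get $h_i^\varepsilon \in C^\infty$ with $h_i^\varepsilon \to h_i$ uniformly and $\nabla h_i^\varepsilon \to \nabla h_i$ in $L^p_{loc}$ for every $p<\infty$ (and with $\|\nabla h_i^\varepsilon\|_\infty$ bounded uniformly in $\varepsilon$, since mollification does not increase the Lipschitz constant). Second, for each $\varepsilon$ the classical Stokes theorem applies to the smooth $n$-form $h_1^\varepsilon\,dh_2^\varepsilon\wedge\cdots\wedge dh_{n+1}^\varepsilon$ on the compact manifold-with-corners $Q^{n+1}$, giving
\begin{equation*}
 \int_{Q^{n+1}} dh_1^\varepsilon\wedge\cdots\wedge dh_{n+1}^\varepsilon = \int_{\partial Q^{n+1}} h_1^\varepsilon\,dh_2^\varepsilon\wedge\cdots\wedge dh_{n+1}^\varepsilon .
\end{equation*}
Third, pass to the limit $\varepsilon\to 0$ on both sides. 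On the left, the integrand is a sum of products of first partials; the uniform $L^\infty$ bound on the gradients plus $L^p$ convergence (and boundedness of $Q^{n+1}$) give convergence of the multilinear expression $\det$-type integrand via a telescoping estimate (replace one factor at a time, using Hölder). On the right, restrict to each of the $2(n+1)$ facets of $\partial Q^{n+1}$: on a facet where, say, the $i$-th coordinate is fixed, the form $h_1\,dh_2\wedge\cdots\wedge dh_{n+1}$ pulls back to an $n$-form on an $n$-cube whose coefficients are again products of traces and tangential derivatives of the $h_j$; the same argument applies there once we know the traces of $h_j^\varepsilon$ converge appropriately on each facet, which follows from uniform convergence of $h_j^\varepsilon$ and $L^p$ convergence of their tangential gradients (the latter because mollification commutes with restriction up to controlled error, or simply because the restriction of a Lipschitz function is Lipschitz and one can mollify on the facet).

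The main obstacle is making the boundary term rigorous: one must ensure that the a.e.\ defined tangential differential of a Lipschitz function on a facet is the pointwise a.e.\ limit (in $L^p$) of the tangential differentials of the mollified extensions, and that the wedge product of these $L^\infty \cap L^p$ forms is continuous under such convergence. This is routine once organized, but it is where the care is needed; everything else is either the elementary Leibniz computation $d(h_1\,dh_2\wedge\cdots\wedge dh_{n+1}) = dh_1\wedge\cdots\wedge dh_{n+1}$ or the classical Stokes theorem. Alternatively, one could cite a version of Stokes' theorem for Lipschitz forms directly (e.g.\ as in the theory of currents or in Whitney's work) and skip the approximation, but I would prefer the self-contained mollification argument sketched above.
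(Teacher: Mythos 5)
Your overall strategy is exactly the paper's: establish the identity for smooth functions via the Leibniz rule and classical Stokes (equivalently, integration by parts), then mollify and pass to the limit. The interior term is fine as you describe it: mollification of a (compactly supported extension of a) Lipschitz function gives uniform bounds on $\nabla h_i^\varepsilon$ and strong $L^p_{loc}$ convergence $\nabla h_i^\varepsilon\to\nabla h_i$, so the telescoping/H\"older argument handles $\int_{Q^{n+1}}\det(\partial_{x_j}h_i^\varepsilon)\,dx\to\int_{Q^{n+1}}\det(\partial_{x_j}h_i)\,dx$ without further input.

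The gap is in the boundary term, precisely at the point you flag. On a facet $\{x_j=l\}$ the classical Stokes formula for $h_i^\varepsilon$ involves the tangential gradients of the \emph{restrictions} $(h_i^\varepsilon)|_{\{x_j=l\}}$, and you need these to converge to the (a.e.\ defined, by Rademacher on the facet) tangential gradients of $h_i|_{\{x_j=l\}}$. Neither of your two suggested justifications closes this. ``Mollify on the facet'' produces functions different from the traces of the interior mollifications, so it breaks the link with the Stokes identity you applied on $Q^{n+1}$. And ``mollification commutes with restriction up to controlled error'' is not available in the strong $L^p$ sense: the trace of $\nabla_{\hat x}(h_i\ast\rho_\varepsilon)$ on the facet is an average of tangential gradients of $h_i$ over nearby parallel slices, and since the facet is a Lebesgue-null set of $Q^{n+1}$, the Lebesgue differentiation argument that gives strong convergence in the interior says nothing at the facet; the slice map $t\mapsto\nabla_{\hat x}h_i(\cdot,t)\in L^p$ need not be continuous at the particular values $t=0,1$. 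What you do get for free, from uniform convergence of the traces together with the uniform Lipschitz bound, is weak-$\ast$ convergence in $L^\infty$ of the tangential gradients on each facet. That is enough, but only because the boundary integrand is of the form $\hat h_1\det(\partial_{x_k}\hat h_i)$, i.e.\ a minor: one must invoke the weak continuity of determinants/Jacobians under this mode of convergence (this is exactly the ingredient the paper cites, Theorem~2.16 of Ambrosio--Fusco--Pallara). So you should replace the claimed strong $L^p$ convergence of tangential traces by: uniform convergence of the traces plus uniform Lipschitz bounds gives weak-$\ast$ convergence of their gradients, and the determinant structure of the facet integrand lets you pass to the limit by weak continuity of minors. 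With that substitution your argument is complete and coincides with the paper's.
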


Here, similarly to the smooth case, the exterior derivative of $h_i$ is a.e.-defined to be the $1$-form $dh_i = \sum_{j=1}^{n+1} \partial_{x_j} h_i \, dx_j$. This gives more explicitly: 
\begin{equation*}
 \int_{Q^{n+1}} dh_1\wedge\dots\wedge dh_{n+1} = \int_{Q^{n+1}}\det\left(\partial_{x_j} h_i\right) \, dx_1\cdots dx_{n+1}
\end{equation*}
and 
\begin{multline*}
 \int_{\partial Q^{n+1}} h_1dh_2\wedge\dots\wedge dh_{n+1} \\ = \sum_{j=1}^{n+1} \int_{[0,1]^n} \hat h_1^{j,1} \det\left(\partial_{x_k} \hat h_i^{j,1}\right)_{\substack{i\geq 2 \\ k\not=j}}  \, d\hat x_j
- \int_{[0,1]^n} \hat h_1^{j,0} \det\left(\partial_{x_k} \hat h_i^{j,0}\right)_{\substack{i\geq 2 \\ k\not=j}} \, d\hat x_j
\end{multline*}
where $\hat x_j = (x_1,\dots,x_{j-1},x_{j+1},\dots,x_{n+1}) \in \R^n$, $ \hat h_i^{j,l}(\hat x_j ) = h_i (x_1,\dots,x_{j-1},l,x_{j+1},\dots,x_{n+1})$ for $l=0,1$.

\begin{proof}
 If $h_1, \dots, h_{n+1}$ are $C^2$-smooth, the equality follows from integration by parts. The general case follows by a smoothing argument together with the weak$^*$ continuity of determinants, see e.g.~Theorem 2.16 in \cite{Ambrosio-Fusco-Pallara}.
\end{proof}

\begin{proof}[Proof of Proposition~\ref{thm:general-case}]
Fix $L>0$ and let $\bar{F}: Q^{n+1}\to (J^k(\R^n), d_0)$ be a Lipschitz extension of $\iota\circ\delta_L\circ F$. Let $(x, u^k, \dots, u^0)$ be the global coordinates introduced in Section~\ref{section:prelims}, and let $h_i$ denote the $x_i$-coordinate and $h_{n+1}$ the $u^0$-coordinate of $\bar{F}$, thus
$\bar{F} = (h_1, \dots, h_n, h, h_{n+1})$ for some vector-valued function $h$.
For $(x,t)\in \partial Q^{n+1}$, we then have $h_i(x, t) = Lx_i$ if $i=1, \dots, n$, and 
\begin{equation*}
h_{n+1}(x, t)=\left\{
  \begin{array}{rl}
   L^{k+1}f_0(x) & \text{if $x\in Q^n$ and $t=0$}\\
    L^{k+1}f_1(x) & \text{if $x\in Q^n$ and $t=1$}\\
    L^{k+1}f_0(x) & \text{if $x\in\partial Q^n$ and $t\in(0,1)$.}\\ 
  \end{array}\right.
\end{equation*}
It follows that
\begin{equation}\label{eqn:boundary-int}
 \begin{split}
 \int_{\partial Q^{n+1}} h_1dh_2\wedge\dots\wedge dh_{n+1} &= (-1)^{n} L^{n+k+1} \int_{Q^n} x_1\partial_{x_1} (f_1-f_0) \, dx\\
   &= (-1)^n L^{n+k+1} \int_{Q^n}(f_0-f_1)dx,
 \end{split}
\end{equation}
where we used integration by parts and \eqref{eqn:f0=f1} to obtain the last equality.
Now let $\omega:= dx_1\wedge\dots\wedge dx_n\wedge du^0$ be the $(n+1)$-form $\omega$ on $J^k(\R^n)$ from Lemma~\ref{lem:diff-form-norm}. We have 
\begin{equation*}
 \bar{F}^*\omega = dh_1\wedge\dots\wedge dh_{n+1}
\end{equation*}
and hence, with \eqref{eqn-omega},
\begin{equation}\label{eqn:int-Q}
\left| \int_{Q^{n+1}} dh_1\wedge\dots\wedge dh_{n+1}\right| =  \left| \int_{Q^{n+1}} \bar{F}^*\omega\right| \leq \lip_{d_0}(\bar{F})^{n+1}.
\end{equation}
Now, \eqref{eqn:Lip-const-ext} clearly follows from Lemma~\ref{lem:stokes} together with \eqref{eqn:boundary-int} and \eqref{eqn:int-Q}. This completes the proof.
\end{proof}

\subsection*{Non-extendability of certain Lipschitz maps}

We now show how Theorem~\ref{thm:non-ext-d0}, Theorem~\ref{thm:main-intro} and Corollary~\ref{cor:jet-jet} can be deduced from the preceding construction.

\begin{proof}[Proof of Theorem~\ref{thm:non-ext-d0}]
  Choose any $C^{k+1}$-smooth functions  $f_0, f_1: \R^n\to\R$ satisfying \eqref{eqn:f0=f1} and
 \begin{equation*}
  \int_{Q^n} f_0dx \not=  \int_{Q^n} f_1dx.
 \end{equation*}
Let $F:\partial Q^{n+1}\to (J^k(\R^n), d_c)$ be the Lipschitz map associated with $f_0$, $f_1$ by \eqref{eqn:def-Lip-assoc}. For $L\geq 0$, let $A_L:=[- 2^{L-1}, 2^{L-1}]^{n+1} \subset \R^{n+1}$ denote the cube centered at the origin with edge-length $2^L$. Set 
 \begin{equation*}
  A:= \bigcup_{L=0}^\infty \partial A_L,
 \end{equation*} 
and define a map $f: A\to J^k(\R^n)$ by
 \begin{equation*}
  f\left(2^L(z-e)\right):= \delta_{2^L}\circ F(z)
 \end{equation*}
 for all $z\in\partial Q^{n+1}$ and $L\in\N$, where $e=(1/2,\dots,1/2)$. It is not difficult to check that $f: A\to (J^k(\R^n),d_c)$ is a Lipschitz map. Indeed, $f$ restricted to each $\partial A_L$ is $\lip_{d_c}(F)$-Lipschitz. Next, set $c:=\max\{d_c(0, F(z)): z\in\partial Q^{n+1}\}$. If $L\not=L'$, then $$\dist(\partial A_L, \partial A_{L'})\geq 2^{\max\{L, L'\}-2}.$$ Therefore, for $x = 2^L(z-e)$ and $x'=2^{L'}(z'-e)$ with $z, z'\in\partial Q^{n+1}$, we have
\begin{equation*}
 \begin{split}
   d_c\left(f(x), f(x'))\right)  &\leq d_c(\delta_{2^L}\circ F(z), 0) + d_c(0, \delta_{2^{L'}}\circ F(z'))\\
    &\leq 2^Lc + 2^{L'}c\\
    &\leq 8c\dist(\partial A_L, \partial A_{L'})\\
    &\leq 8c \, \|x-x'\|,
 \end{split}
\end{equation*}
which proves the claim. It follows that $\iota\circ f: A\to (J^k(\R^n),d_0)$ is a Lipschitz map. Now assume that, for some $\lambda>0$, there exists a $\lambda$-Lipschitz extension $\bar{f}: \R^{n+1}\to  (J^k(\R^n), d_0)$ of $\iota\circ f$. Then, for each $L\geq 1$, the map $\bar{F}_L: Q^{n+1}\to  (J^k(\R^n), d_0)$ given by $\bar{F}_L(z):= \bar{f} (2^L(z-e))$ is a $(2^L\lambda)$-Lipschitz extension of $\iota\circ\delta_{2^L}\circ F$. By Proposition~\ref{thm:general-case}, it follows that 
\begin{equation*}
 (2^L \lambda )^{n+1} \geq 2^{L(n+k+1)}\left| \int_{Q^n}(f_0-f_1)dx\right|,
\end{equation*} 
which gives a contradiction for $L$ large enough. 
 \end{proof}

It is clear that the map $f: A\to (J^k(\R^n),d_c)$ above does not admit a Lipschitz extension $\hat f: \R^{n+1}\to  (J^k(\R^n), d_c)$ either. Indeed, otherwise $\iota\circ \hat f: \R^{n+1}\to  (J^k(\R^n), d_0)$ would be a Lip\-schitz extension of $\iota\circ f: A\to (J^k(\R^n),d_0)$ which gives a contradiction. We now prove the stronger statement Theorem~\ref{thm:main-intro}, which asserts the existence of a Lipschitz map $f: S^n\to(J^k(\R^n), d_c)$ which does not admit a Lipschitz extension.

\begin{proof}[Proof of Theorem~\ref{thm:main-intro}]
Choose any $C^{k+1}$-smooth functions  $f_0, f_1: \R^n\to\R$ satisfying \eqref{eqn:f0=f1} and
 \begin{equation*}
  \int_{Q^n} f_0dx \not=  \int_{Q^n} f_1dx.
 \end{equation*}
 We show that the Lipschitz map $F:\partial Q^{n+1}\to (J^k(\R^n), d_c)$ associated with $f_0$, $f_1$ by \eqref{eqn:def-Lip-assoc} does not admit a Lipschitz extension $F': Q^{n+1}\to (J^k(\R^n), d_c)$. Suppose, on the contrary, that there exists a $\lambda$-Lipschitz extension $F': Q^{n+1}\to (J^k(\R^n), d_c)$ of $F$ for some $\lambda>0$. Let $L>0$. Then $\iota\circ\delta_L\circ F': Q^{n+1}\to (J^k(\R^n), d_0)$ is a $(L\lambda)$-Lipschitz extension of $\iota\circ\delta_L\circ F$. By Proposition~\ref{thm:general-case}, it follows that 
\begin{equation*}
 L\lambda \geq L^{1+\frac{k}{n+1}}\left| \int_{Q^n}(f_0-f_1)dx\right|^{\frac{1}{n+1}}, 
\end{equation*} 
which gives a contradiction for $L$ large enough.
 \end{proof}

 \begin{proof}[Proof of Corollary~\ref{cor:jet-jet}]
The corollary follows from the construction in the proof of Theorem~\ref{thm:non-ext-d0} combined with the following observation. Let $E:= \{x=0\}\cap \{u^i=0: i<l\} \subset J^l(\R^m)$. Then both distances $d_0$ and $d_c$ restricted to $E$ coincide and $E$ equipped with any of these distances is isometric to the Euclidean space $\R^{d_l^m}$. Indeed let $\pi:J^l(\R^m) \rightarrow \R^{d_l^m}$ be defined by $\pi(x,u^l,\dots,u^0) := u^l$ and $s:\R^{d_l^m} \rightarrow J^l(\R^m)$ be defined by $s(u^l):=(0,u^l,0,\dots,0) \in E \subset  J^l(\R^m)$. Let $p, q \in E$. Then $\gamma(t):= s((1-t) \pi(p)+ t \pi(q))$, $t\in [0,1]$, is a $C^1$ horizontal curve joining $p$ and $q$ with 
\begin{equation*}
 length_{g_0} (\gamma) = \| \pi(q)- \pi(p)\|.
\end{equation*}
It follows that 
\begin{equation*}
 d_0(p,q)\leq d_c(p,q) \leq \| \pi(q)- \pi(p)\|.
\end{equation*}
On the other hand, for any $C^1$ curve $\gamma$ joining $p$ and $q$ in $J^l(\R^m)$, then $\hat \gamma := \pi \circ\gamma$ is a $C^1$ curve joining $\pi(p)$ and $\pi(q)$ with 
\begin{equation*}
 \|\pi(q)- \pi(p)\| \leq length_{\R^{d_l^m}} (\hat \gamma) \leq  length_{g_0} (\gamma),
\end{equation*}
which implies that
\begin{equation*}
  \|\pi(q)- \pi(p)\| \leq d_0(p,q).
\end{equation*}
Hence $d_0(p,q)= d_c(p,q) = \|\pi(q)- \pi(p)\|$ for all $p, q\in E$ and $s$ is an isometry from the Euclidean space $\R^{d_l^m}$ onto $(E,d_0) = (E,d_c)$.

To conclude, let $A\subset\R^{n+1}$ and $f:A\rightarrow (J^k(\R^n),d_c)$ be the Lipschitz map constructed in the proof of Theorem~\ref{thm:non-ext-d0}. Remembering that $d_l^m \geq n+1$ by hypothesis and considering $A\subset \R^{n+1} \hookrightarrow \R^{d_l^m}$ as a subset of $\R^{d_l^m}$, we set $\tilde f := f\circ \pi : s(A) \rightarrow J^k(\R^n)$. This map is $(d_0,d_c)$-Lipschitz. Assume that there exists an extension $\bar f:J^l(\R^m) \rightarrow J^k(\R^n)$ of $\tilde f$ which is $(d_c,d_0)$-Lipschitz. Then the map $\bar f \circ s\lfloor_{\R^{n+1}}: \R^{n+1} \rightarrow (J^k(\R^n),d_0)$ is a Lipschitz extension of $\iota\circ f$ which gives a contradiction.
\end{proof}

\begin{rmk}
 Note that it clearly follows from the preceding proofs that Theorem~\ref{thm:non-ext-d0} and Theorem~\ref{thm:main-intro} hold more generally true in a Carnot group $G$ as soon as one can find a Lipschitz map $F:\partial Q^{n+1}\to (G, d_c)$ such that 
\begin{equation*}
 \sup_{L>0} \frac{1}{L} \inf\left\{\lip_{d_0}(\bar F_L)\;|\; \bar F_L: Q^{n+1}\to (G, d_0) \text{ Lipschitz extension of } \iota \circ \delta_L \circ F\right\} =+\infty.
\end{equation*}
\end{rmk}

\section{Lower bounds for filling volume functions}\label{section:lower-bounds-FV}

As mentioned in the introduction, our construction in Section~\ref{section:main-results-proof} can be used to prove lower bounds for the $(n+1)$-st filling volume function $\FV_{n+1}$ on $(J^k(\R^n), d_0)$. Such bounds have recently been obtained by Young in \cite{Young-nilpotent-isop} with different methods.

In the following we will work with the theory of integral currents in $(J^k(\R^n), d_0)$ but we could just as well work for example with singular Lipschitz chains, as only the definition of mass and of boundary will be used in the proof of the theorem below.
For details concerning integral currents we refer to \cite{Federer-Fleming,Ambrosio-Kirchheim-currents}.
Recall that the mass $\mass(T)$ of an integral current $T$ in $(J^k(\R^n), d_0)$ is given by $$\mass(T):= \sup\{ |T(\omega)| : \text{$\omega$ compactly supported differential form with $\|\omega\|_{g_0}\leq 1$}\}$$ and that the boundary $\partial T$ is defined by $\partial T(\alpha):= T(d\alpha)$.
Now, the $(n+1)$-st filling function $\FV_{n+1}$ in $(J^k(\R^n), d_0)$ is defined by
\begin{equation*}
 \FV_{n+1}(r):= \sup\{\fillvol_{n+1}(T)\;|\; \text{ $T$ integral $n$-current with $\partial T=0$ and $\mass(T)\leq r$}\},
\end{equation*}
where $\fillvol_{n+1}(T)$ is the least mass of an integral $(n+1)$-current $S$ in $(J^k(\R^n), d_0)$ with boundary $T$. Our theorem is:

\begin{thm}\label{thm:fillvol-growth}
 For all $n,k\geq 1$, there exists $\delta>0$ such that 
 \begin{equation*}
  \FV_{n+1}(r) \geq \delta r^{\frac{n+k+1}{n}}\quad\text{ for all $r\geq 0$,}
 \end{equation*}
where $\FV_{n+1}(r)$ is the filling volume function in  $(J^k(\R^n), d_0)$ defined above.
\end{thm}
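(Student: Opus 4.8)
The plan is to reduce the filling volume lower bound to Proposition~\ref{thm:general-case} by realizing the map $\iota\circ\delta_L\circ F$ on $\partial Q^{n+1}$ as (the parametrization of) an integral $n$-cycle $T_L$ in $(J^k(\R^n),d_0)$, and then showing that any integral $(n+1)$-current filling $T_L$ has mass bounded below by the right power of $L$. Concretely, I would fix $C^{k+1}$ functions $f_0,f_1:\R^n\to\R$ agreeing to order $k$ on $\partial Q^n$ with $\int_{Q^n}(f_0-f_1)\,dx\neq 0$, let $F:\partial Q^{n+1}\to(J^k(\R^n),d_c)$ be the associated Lipschitz map from \eqref{eqn:def-Lip-assoc}, and set $T_L:=(\iota\circ\delta_L\circ F)_{\#}\llbracket \partial Q^{n+1}\rrbracket$, the pushforward of the fundamental class of $\partial Q^{n+1}$ (which is itself $\partial\llbracket Q^{n+1}\rrbracket$), so that $\partial T_L=0$ automatically. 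Since $\iota\circ\delta_L\circ F$ is $(L\lip_{d_c}(F))$-Lipschitz into $(J^k(\R^n),d_0)$, the standard mass estimate for pushforwards gives $\mass(T_L)\leq c_0 L^n$ for a constant $c_0$ depending only on $n,k,F$.

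Next I would bound $\fillvol_{n+1}(T_L)$ from below. Let $S$ be any integral $(n+1)$-current with $\partial S=T_L$. Apply $S$ to the bounded $(n+1)$-form $\omega:=dx_1\wedge\cdots\wedge dx_n\wedge du^0$; by Lemma~\ref{lem:diff-form-norm}, $\|\omega\|_{g_0}\le 1$, so $|S(\omega)|\le\mass(S)$. On the other hand, choosing a primitive $\eta$ of $\omega$ with $\|\eta\|_{g_0}$ bounded on a neighborhood of the (compact) support of $T_L$ — for instance $\eta:=x_1\,dx_2\wedge\cdots\wedge dx_n\wedge du^0$, so $d\eta=\omega$ — and using $S(\omega)=S(d\eta)=\partial S(\eta)=T_L(\eta)$, I can compute $T_L(\eta)$ directly from the definition of $F$ exactly as in the boundary computation \eqref{eqn:boundary-int} in the proof of Proposition~\ref{thm:general-case}: it equals $\pm L^{n+k+1}\int_{Q^n}(f_0-f_1)\,dx$, a nonzero constant times $L^{n+k+1}$. (One subtlety: $\eta$ is not compactly supported, but $T_L$ is a compactly supported cycle, so $T_L(\eta)$ makes sense and agrees with $T_L(d\alpha)$ for any compactly supported $(n-1)$-form $\alpha$ with $\alpha=\eta$ near $\operatorname{spt}T_L$; alternatively one truncates $\eta$ by a cutoff, which changes nothing since $T_L=\partial S$ is a boundary.) Hence $\mass(S)\ge |S(\omega)|=|T_L(\eta)|\ge c_1 L^{n+k+1}$, so $\fillvol_{n+1}(T_L)\ge c_1 L^{n+k+1}$.

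Combining the two estimates: given $r>0$, choose $L$ with $c_0 L^n=r$, i.e.\ $L=(r/c_0)^{1/n}$; then $T_L$ is an admissible competitor in the definition of $\FV_{n+1}(r)$ and $\FV_{n+1}(r)\ge\fillvol_{n+1}(T_L)\ge c_1 L^{n+k+1}=c_1 c_0^{-(n+k+1)/n}\,r^{(n+k+1)/n}$, which is the claimed bound with $\delta:=c_1 c_0^{-(n+k+1)/n}$. For small $r$ (where no such $L\ge$ some threshold exists, or where one wants the bound uniformly for all $r\ge 0$), one can either shrink $\delta$ to absorb the constant, or simply note that after decreasing $\delta$ the inequality is trivial for $r$ in a bounded range since $\FV_{n+1}\ge 0$; actually the scaling argument works verbatim for every $r>0$ and $r=0$ is trivial.

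The main obstacle I anticipate is the mass estimate for the pushforward, i.e.\ verifying $\mass(T_L)\le c_0 L^n$ cleanly in the metric-current setting: the map $\iota\circ\delta_L\circ F$ is only Lipschitz, not smooth, and $\partial Q^{n+1}$ is not a smooth closed manifold but a Lipschitz sphere, so one must invoke the functoriality and mass-nondecrease-under-Lipschitz-maps properties of integral currents (Federer--Fleming, or Ambrosio--Kirchheim in the metric setting) rather than a naive Jacobian computation; this is where the remark in the excerpt that ``only the definition of mass and of boundary will be used'' is doing real work, and one should be careful that the pushforward is well-defined and that $\partial(g_{\#}R)=g_{\#}(\partial R)$ holds for Lipschitz $g$. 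A secondary, more cosmetic point is handling the non-compact primitive $\eta$, which is easily dispatched by a cutoff argument as indicated above. Everything else is a direct transcription of the calibration-type computation already carried out in the proof of Proposition~\ref{thm:general-case}.
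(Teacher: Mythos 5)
Your proposal is correct and follows essentially the same route as the paper: the same cycle $T_L=(\iota\circ\delta_L\circ F)_{\#}\partial\llbracket Q^{n+1}\rrbracket$, the same calibration pair $\omega=dx_1\wedge\dots\wedge dx_n\wedge du^0$ with primitive $x_1\,dx_2\wedge\dots\wedge dx_n\wedge du^0$ evaluated on $T_L$ via Stokes and the boundary computation from Proposition~\ref{thm:general-case}, the same Lipschitz pushforward mass bound $\mass(T_L)\lesssim L^n$, and the same scaling in $L$. The technical points you flag (compact support of the primitive, functoriality of pushforward and boundary for Lipschitz maps) are handled implicitly in the paper by working in the Ambrosio--Kirchheim/Federer--Fleming framework, exactly as you suggest.
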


It is possible to give an explicit (though not optimal) value for $\delta$ for all $n,k\geq 1$, see the proof below.

\begin{proof}
  Choose $C^{k+1}$-smooth functions  $f_0, f_1: \R^n\to\R$ satisfying \eqref{eqn:f0=f1} and
 \begin{equation*}
  \int_{Q^n} f_0dx \not=  \int_{Q^n} f_1dx
 \end{equation*}
 and let $F$ be the Lipschitz map associated with $f_0$, $f_1$ by \eqref{eqn:def-Lip-assoc}.  Fix $L>0$ and set $\hat{F}:= \iota\circ\delta_L\circ F$. Then $T_L:=  \hat{F}_\#(\partial \Lbrack 1_{Q^{n+1}}\Rbrack)$ defines an integral $n$-current in  $(J^k(\R^n), d_0)$ and satisfies $\partial T_L = 0$. Here, $\Lbrack 1_{Q^{n+1}}\Rbrack$ is the current obtained by integrating a differential $(n+1)$-form on $Q^{n+1}$. Define a differential $n$-form on  $J^k(\R^n)$ by $\alpha:= x_1dx_2\wedge\dots\wedge dx_n\wedge du^0$ and note that $d\alpha = \omega$, where $\omega$ is as in Lemma~\ref{lem:diff-form-norm}. We obtain
 \begin{equation*}
  \hat{F}^*\alpha = h_1dh_2\wedge\dots\wedge dh_{n+1},
 \end{equation*}
 where $h_i$ denote the $x_i$-coordinate and $h_{n+1}$ the $u^0$-coordinate of $\hat{F}$. 
 Hence, as in the proof of Proposition~\ref{thm:general-case}, 
 \begin{equation*}
  T_L(\alpha) =  \int_{\partial Q^{n+1}} \hat{F}^*\alpha = (-1)^nL^{n+k+1} \int_{Q^n}(f_0-f_1)dx,
 \end{equation*}
 and in particular, $T_L\not=0$.
 Furthermore, if $S$ is an integral $(n+1)$-current in $(J^k(\R^n), d_0)$ with $\partial S = T_L$ then, together with Lemma~\ref{lem:diff-form-norm}, we get 
 \begin{equation*}
  \mass(S) \geq |S(\omega)| = |T_L(\alpha)| = L^{n+k+1}\left| \int_{Q^n}(f_0-f_1)dx\right|.
 \end{equation*}
Since
 \begin{equation*}
  \mass(T_L) \leq \lip_{d_0}(\hat{F})^n \vol(\partial Q^{n+1})\leq  L^n \lip_{d_0}(F)^n\vol(\partial Q^{n+1})
 \end{equation*}
 and since $S$ was arbitrary, we conclude
 \begin{equation*}
  \fillvol_{n+1}(T_L) \geq \delta\mass(T_L)^{\frac{n+k+1}{n}},
 \end{equation*}
 where $$\delta:= \lip_{d_0}(F)^{-(n+k+1)} \vol(\partial Q^{n+1})^{-\frac{n+k+1}{n}}\left| \int_{Q^n}(f_0-f_1)dx\right|.$$
 The proof is now complete after noting that for every $r>0$ there exists $L>0$ such that $T_L$ has mass exactly $r$.
\end{proof}

\medskip
\noindent{\bf Acknowledgment.} This paper was written while the second author was visiting the Universit\'e de Nice Sophia-Antipolis. He gratefully acknowledges the hospitality he enjoyed there.


\end{document}